\newcommand\wordcount{
    \immediate\write18{texcount -sum -1 \jobname.tex > 'count.txt'}
\input{count.txt}words}
\newtheorem{prop}{Proposition}
\newtheorem{definition}{Definition}
\newtheorem{assumption}{Assumption}
\def\E{{\rm E}\,}
\def\Supp{{\rm Supp}\,}
\begin{document}

\sloppy

\title{Local average causal effects and superefficiency}
\author{Peter M. Aronow\thanks{Peter M. Aronow is Assistant Professor, Departments of Political Science and Biostatistics, Yale University, 77 Prospect St., New Haven, CT 06520 (Email: peter.aronow@yale.edu). The author thanks Don Green, Cyrus Samii, Jas Sekhon  and Mark van der Laan for helpful comments. All remaining errors are the author's responsibility.}} 

\maketitle

\begin{abstract}
Recent approaches in causal inference have proposed estimating average causal effects that are local to some subpopulation, often for reasons of efficiency. These inferential targets are sometimes data-adaptive, in that they are dependent on the empirical distribution of the data. In this short note, we show that if researchers are willing to adapt the inferential target on the basis of efficiency, then extraordinary gains in precision can be obtained. Specifically, when causal effects are heterogeneous, any asymptotically normal and root-$n$ consistent estimator of the population average causal effect is superefficient for a data-adaptive local average causal effect. Our result illustrates the fundamental gain in statistical certainty afforded by indifference about the inferential target.
\end{abstract}

\section{Introduction}


When causal effects are heterogeneous, then inferences depend on the population for which causal effects are estimated. Although population average causal effects have traditionally been the inferential targets, recent results have focused on estimating average causal effects that are {\it local} to some subpopulation for reasons of efficiency. These approaches include trimming observations based on the distribution of the propensity score \citep{crumpetal}, using regression adjustment to estimate reweighted causal effects \citep{angristpischke, humphreys2009}, or implementing calipers for propensity-score matching \citep{rosenbaumrubin1985,austin2011}. In some cases, the target parameter is dependent on the empirical distribution of the data, including cases where the researcher is explicitly conducting inference on, e.g., the average treatment effect among the treated conditional on the observed covariate distribution \citep{abadieimbens2002}, or other causal sample functionals \citep{aronowetal,balzeretal}, without revision to the estimator being used.


This approach, taken at full generality, implies a form of indifference to which population causal effects are measured for. This indifference can be codified in the form of a {\it data-adaptive} target parameter \citep{vanderlaanetal} that is allowed to vary with the data depending on which subpopulation's local average causal effect is best estimated. When treatment effects are heterogeneous, adaptively changing the target parameter on the basis of efficiency yields an unusual result: if the population average causal effect can be consistently estimated with a root-$n$ consistent and asymptotically normal estimator $\hat\theta$, then the same estimator $\hat\theta$ is always superefficient (i.e., faster than root-$n$ consistent) for a data-adaptive local average causal effect. Furthermore, with an additional regularity condition on mean square convergence, we show that the mean square error of $\hat \theta$ for a data-adaptive local average causal effect is of $o(n^{-{1}})$.


\section{Results}

Consider a full data probability distribution $G$ with an associated causal effect distribution $\tau$ with finite expectation $\E_G[\tau]$, where $\E_G[.]$ denotes the expectation over the distribution $G$. We impose a regularity condition on $\tau$ establishing non-degeneracy of $\tau$.


\begin{assumption}[Effect heterogeneity]
$\min\left(\sup \left( \Supp(\tau) \right) - \E_G[\tau], \E_G[\tau] - \inf \left(\Supp(\tau)\right)\right) = c > 0.$
\end{assumption}

We observe an empirical distribution $F_n$. Suppose we have an root-$n$ consistent and asymptotically normal estimator of the average causal effect $\E_G[\tau]$, $\hat \theta$.
\begin{definition}
An estimator $\hat\theta$ is root-$n$ consistent and asymptotically normal for $\theta_0$ if $\sqrt n (\hat\theta - \theta_0) = \mathcal{N}(0,\sigma^2) + o_p(1)$, for some $0 < \sigma^2 < \infty$.
\end{definition}
We now define the target parameter, $\theta_{F_n}$.
\begin{definition}
Let the target parameter $$\theta_{F_n} =  \left\{
     \begin{array}{lr}
      \hat \theta & : | \hat \theta - \E_G[\tau] | \leq c  \\
     \E_G[\tau] + c & : \hat\theta - \E_G[\tau] > c  \\
     \E_G[\tau] - c  & : \hat\theta - \E_G[\tau] < -c 
     \end{array}
   \right.,$$
   where, as in Assumption 1, $c = \min\left(\sup \left( \Supp(\tau) \right) - \E_G[\tau], \E_G[\tau] - \inf \left(\Supp(\tau)\right)\right)$.
\end{definition}

The target parameter adapts naturally to the closest value in an interval surrounding $\E_G[\tau]$, where the width of the interval is defined by the support of $\tau$. We formalize how each $\theta_{F_n}$ is a local average treatment effect.

\begin{prop}
There exists a nonnegative weighting associated with each empirical distribution $F_n$, $w_{F_n}$, such that across all $F_n$, $\theta_{F_n} = \frac{\E_G[w_{F_n} \tau]}{\E_G[w_{F_n}]}$.
\end{prop}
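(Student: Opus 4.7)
The plan is to proceed in two steps: first establish that $\theta_{F_n}$ always lies in the convex hull of $\Supp(\tau)$, and then exhibit explicit nonnegative weights achieving the required ratio.

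For the first step, observe that in each of the three branches of Definition 2 we have $\theta_{F_n} \in [\E_G[\tau] - c,\, \E_G[\tau] + c]$. By the choice of $c$ in Assumption 1, this interval is contained in $[\inf \Supp(\tau),\, \sup \Supp(\tau)]$, so $v \equiv \theta_{F_n}$ lies in the convex hull of the support of $\tau$.

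For the second step, I propose the threshold-indicator weighting
\[
w_{F_n}(\tau) = \alpha\, \mathbbm{1}[\tau \leq v] + \beta\, \mathbbm{1}[\tau > v],
\]
with nonnegative scalars $\alpha = \E_G[(\tau - v)\mathbbm{1}[\tau > v]]$ and $\beta = \E_G[(v - \tau)\mathbbm{1}[\tau \leq v]]$. A short calculation shows
\[
\E_G[w_{F_n}(\tau - v)] = \alpha\, \E_G[(\tau - v)\mathbbm{1}[\tau \leq v]] + \beta\, \E_G[(\tau - v)\mathbbm{1}[\tau > v]] = -\alpha\beta + \beta\alpha = 0,
\]
so that $\E_G[w_{F_n}\tau]/\E_G[w_{F_n}] = v = \theta_{F_n}$ whenever the denominator is positive, which holds whenever $v$ is strictly interior to $\Supp(\tau)$. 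The case $v = \E_G[\tau]$ is covered trivially by $w_{F_n} \equiv 1$.

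The main obstacle will be the boundary case in which $v$ coincides with $\inf \Supp(\tau)$ or $\sup \Supp(\tau)$ without a corresponding atom: in that situation both $\alpha$ and $\beta$ collapse to zero and the threshold-indicator construction degenerates. I would address this either by invoking an implicit regularity condition placing positive mass at the extremes of $\Supp(\tau)$ (so that the indicator weighting retains positive mass), or by passing to an approximating sequence of weights concentrating near the relevant endpoint. For $v$ strictly interior to the support, both $\alpha$ and $\beta$ are strictly positive and the construction above gives the claimed representation directly.
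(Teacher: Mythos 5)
Your construction is correct where it applies, and it is considerably more explicit than the paper's proof, which consists of a single sentence appealing to ``the fact that a weighted mean can obtain any value in the interval defined by the infimum and supremum of its distribution's support.'' Your two steps supply exactly what that sentence leaves implicit: the containment $\theta_{F_n} \in [\E_G[\tau]-c,\,\E_G[\tau]+c] \subseteq [\inf\Supp(\tau),\,\sup\Supp(\tau)]$, and then the threshold-indicator weights, for which the algebra $\E_G[w_{F_n}(\tau-v)] = -\alpha\beta+\beta\alpha = 0$ checks out. One small imprecision: the denominator $\E_G[w_{F_n}] = \alpha\Pr(\tau\leq v)+\beta\Pr(\tau>v)$ is positive whenever $\inf\Supp(\tau) < v < \sup\Supp(\tau)$, i.e.\ whenever $v$ is interior to the \emph{convex hull} of the support, not to the support itself; this is the condition you actually need (and the one your construction delivers) when the support has gaps, as for a two-point effect distribution.

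The boundary obstacle you flag is genuine, and neither of your proposed repairs fully resolves it --- but the same gap sits inside the paper's one-line proof, so you have isolated a real defect rather than created one. Because $c$ is defined as a \emph{minimum}, at least one of the endpoints $\E_G[\tau]\pm c$ necessarily coincides with $\inf\Supp(\tau)$ or $\sup\Supp(\tau)$, and Definition 2 assigns $\theta_{F_n}$ that value whenever $\hat\theta$ exits the interval on the corresponding side --- an event of positive probability at every finite $n$ for an asymptotically normal $\hat\theta$. If $\tau$ has no atom there (any continuous effect distribution), then $\E_G[w\tau]/\E_G[w] = \sup\Supp(\tau)$ forces $w\,(\sup\Supp(\tau)-\tau)=0$ almost surely and hence $w=0$ almost surely, so \emph{no} nonnegative weighting with positive mass represents that value: the ``weighted means fill the closed interval'' fact is simply false at an endpoint without an atom. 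Your approximating-sequence idea cannot rescue the exact identity (it yields values converging to, but never equal to, the endpoint), and positive mass at the extremes is an assumption the paper does not make. So the proposition as stated needs either such an atom condition or a modification of Definition 2 that clips $\hat\theta$ strictly inside the support. Your write-up is the more careful of the two arguments; it just stops short of noting that the degenerate case is unavoidable under the paper's own definitions rather than a removable technicality.
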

A proof of Proposition 1 follows directly from the fact that a weighted mean can obtain any value in the interval defined by the infimum and supremum of its distribution's support. We now prove the the superefficiency of $\hat\theta$.




\begin{prop}
Suppose that Assumption 1 holds. Then for any root-$n$ consistent and asymptotically normal estimator of $\E_G[\tau]$, $\hat\theta$, $\sqrt n(\hat\theta - \theta_{F_n}) = o_p(1).$ 
\end{prop}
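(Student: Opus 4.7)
The plan is to exploit the fact that, because $\hat\theta$ is root-$n$ consistent for $\E_G[\tau]$, it is $O_p(n^{-1/2})$-close to $\E_G[\tau]$, while the ``safe zone'' $[\E_G[\tau]-c,\,\E_G[\tau]+c]$ in the definition of $\theta_{F_n}$ has a fixed positive radius $c$ (guaranteed by Assumption 1). So with probability tending to one, $\hat\theta$ lies inside this interval and the first branch of the piecewise definition kicks in, forcing $\theta_{F_n}=\hat\theta$ exactly.

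First, I would note that root-$n$ consistency and asymptotic normality imply $\hat\theta - \E_G[\tau] = O_p(n^{-1/2}) = o_p(1)$, so $P(|\hat\theta - \E_G[\tau]| \le c) \to 1$ as $n\to\infty$. Let $A_n$ denote this event. Second, on $A_n$ we have by Definition 2 that $\theta_{F_n}=\hat\theta$, so $\sqrt{n}(\hat\theta - \theta_{F_n}) = 0$ identically on $A_n$.

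Third, to establish $o_p(1)$, fix $\varepsilon > 0$ and bound
\[
P\bigl(\sqrt{n}\,|\hat\theta - \theta_{F_n}| > \varepsilon\bigr) \;\le\; P(A_n^c) \;=\; P\bigl(|\hat\theta - \E_G[\tau]| > c\bigr) \;\longrightarrow\; 0,
\]
which delivers the conclusion. The off-event behavior of $\theta_{F_n}$ (where it truncates to $\E_G[\tau]\pm c$) does not even need to be analyzed because it contributes zero to the limit; all that matters is that $A_n^c$ has vanishing probability.

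I do not foresee a real obstacle: the only subtle point is recognizing that $c>0$ is a \emph{fixed} constant independent of $n$, so the shrinking $n^{-1/2}$-scale fluctuations of $\hat\theta$ are eventually dominated by it. The superefficiency is essentially a consequence of the target parameter being defined to ``snap to'' $\hat\theta$ in a neighborhood that, although statistically large (of order $1$), is wide enough to contain $\hat\theta$ asymptotically almost surely.
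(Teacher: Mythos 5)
Your proof is correct, and it takes a genuinely more direct route than the paper's. The paper decomposes $\hat\theta = \tilde\theta + u$ into an exactly Gaussian piece $\tilde\theta = \mathcal{N}(\E_G[\tau],\sigma^2/n)$ and a remainder $u = o_p(n^{-1/2})$, bounds $\Pr(\tilde\theta - \theta_{F_n} \neq 0)$ by the explicit Gaussian tail $2\Phi(-c\sqrt{n}/\sigma)$ to conclude $\tilde\theta - \theta_{F_n} = o_p(a_n)$ for any positive sequence $(a_n)$, and then adds the remainder back in. You skip the decomposition entirely: since $\hat\theta - \E_G[\tau] = O_p(n^{-1/2}) = o_p(1)$ and $c>0$ is fixed, the event $A_n = \{|\hat\theta - \E_G[\tau]| \le c\}$ has probability tending to one, and on $A_n$ the definition of $\theta_{F_n}$ gives $\theta_{F_n} = \hat\theta$ exactly, so $\Pr(\sqrt{n}|\hat\theta - \theta_{F_n}| > \varepsilon) \le \Pr(A_n^c) \to 0$. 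Your version buys two things: (i) it only uses consistency of $\hat\theta$, not asymptotic normality, so it actually proves a more general statement (and, since the difference is identically zero on $A_n$, it delivers $a_n^{-1}(\hat\theta - \theta_{F_n}) = o_p(1)$ for \emph{any} scaling $a_n$, not just $\sqrt{n}$); and (ii) it sidesteps a small imprecision in the paper's argument, namely that $\{\tilde\theta - \theta_{F_n} = 0\}$ is not quite the event $\{|\tilde\theta - \E_G[\tau]|\le c\}$, because $\theta_{F_n}$ is defined through $\hat\theta = \tilde\theta + u$ rather than through $\tilde\theta$, so the claimed identity $\Pr(\tilde\theta - \theta_{F_n}\neq 0) = 2\Phi(-c\sqrt{n}/\sigma)$ requires a little more care. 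What the paper's approach buys in exchange is a quantitative rate: the explicit tail bound $2\Phi(-c\sqrt{n}/\sigma)$ foreshadows and feeds into the mean-square calculation of Proposition 3, whereas your bound $\Pr(A_n^c)\to 0$ is purely qualitative.
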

\begin{proof}
The author thanks Jas Sekhon for suggesting the following proof strategy. Decompose $\hat\theta$ into $\tilde\theta=\mathcal{N}(\E_G[\tau],\sigma^2/n)$ and $u = o_p(n^{-1/2})$, so that $\hat\theta = \tilde\theta + u$. Since $(\tilde\theta -  \theta_{F_n})$ is $o_p(a_n)$ for any positive sequence $(a_n)$, the rate of convergence of $\hat \theta$ is at worst governed by the bound ensured by $u$'s $o_p(n^{-1/2})$ convergence. To prove the claim, note that for any positive $\epsilon$, $
\Pr\left( \frac{| \tilde\theta -  \theta_{F_n} |}{a_n} \geq \epsilon \right) \leq \Pr\left( \tilde\theta - \theta_{F_n} \neq 0 \right) = 2\Phi(-c \sqrt{n}/\sigma).
$
Since $\lim_{n\rightarrow \infty}  2\Phi(-c \sqrt{n}/\sigma) = 0$, $(\tilde\theta -  \theta_{F_n})$ is $o_p(a_n)$. Thus $\hat\theta - \theta_{F_n} =  o_p(a_n) +  o_p(n^{-1/2}) = o_p(n^{-1/2})$, yielding the result.
%
\end{proof}

When an additional regularity condition is imposed on the convergence of $\hat\theta$ to normality, a stronger result can be obtained about the rate of mean square convergence.
\begin{prop}
Suppose that $\hat\theta$ obeys $\sqrt n (\hat\theta - \E_G[\tau]) = \mathcal{N}(0,\sigma^2) + \epsilon$, where $\E_G[\epsilon^2] = o(n^{-1/2}).$  Then $\E_G[(\hat\theta - \theta_{F_n})^2]  = o(n^{-1}).$ 
\end{prop}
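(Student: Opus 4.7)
The plan is to reduce the second-moment bound to a tail computation for a Gaussian perturbed by a low-moment remainder, and then to exploit the super-polynomial decay of Gaussian tails together with the $o(n^{-1/2})$ bound on $\E_G[\epsilon^2]$.

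First, I would observe that $\theta_{F_n}$ is the Euclidean projection of $\hat\theta$ onto the interval $[\E_G[\tau] - c, \E_G[\tau] + c]$: it equals $\hat\theta$ whenever $|\hat\theta - \E_G[\tau]| \le c$, and equals the nearest endpoint otherwise. Since the projection is nonexpansive and coincides with $\hat\theta$ on the good event, one obtains the pointwise bound
\[ (\hat\theta - \theta_{F_n})^2 \;\le\; (\hat\theta - \E_G[\tau])^2 \, \mathbbm{1}\bigl[ |\hat\theta - \E_G[\tau]| > c\bigr]. \]
Substituting the assumed decomposition $\sqrt{n}(\hat\theta - \E_G[\tau]) = Z + \epsilon$, with $Z \sim \mathcal{N}(0, \sigma^2)$ and $\E_G[\epsilon^2] = o(n^{-1/2})$, converts the right-hand side into $n^{-1}(Z+\epsilon)^2 \, \mathbbm{1}[|Z+\epsilon| > c\sqrt{n}]$.

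The next step is to bound the resulting expectation. Using $(Z+\epsilon)^2 \le 2Z^2 + 2\epsilon^2$ together with the union bound $\mathbbm{1}[|Z+\epsilon| > c\sqrt{n}] \le \mathbbm{1}[|Z| > c\sqrt{n}/2] + \mathbbm{1}[|\epsilon| > c\sqrt{n}/2]$, the expectation decomposes into three pieces. The pure Gaussian term $\E_G[Z^2 \, \mathbbm{1}[|Z| > c\sqrt{n}/2]]$ decays faster than any polynomial in $1/n$ by standard normal tail estimates. The pure $\epsilon$ term is at most $2\E_G[\epsilon^2] = o(n^{-1/2})$. The cross term $\E_G[Z^2 \, \mathbbm{1}[|\epsilon| > c\sqrt{n}/2]]$ I would handle by Cauchy--Schwarz, bounding it above by $\sqrt{\E_G[Z^4]}\,\sqrt{\Pr(|\epsilon| > c\sqrt{n}/2)}$, and then by Markov, $\Pr(|\epsilon| > c\sqrt{n}/2) \le 4\E_G[\epsilon^2]/(c^2 n) = o(n^{-3/2})$, which makes this term $o(n^{-3/4})$. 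Summing gives $\E_G[(Z+\epsilon)^2 \, \mathbbm{1}[|Z+\epsilon| > c\sqrt{n}]] = o(n^{-1/2})$, and dividing by $n$ yields $\E_G[(\hat\theta - \theta_{F_n})^2] = o(n^{-3/2})$, which is strictly stronger than the claimed $o(n^{-1})$.

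The only real obstacle is the cross term, since $Z^2$ and $\mathbbm{1}[|\epsilon| > c\sqrt{n}/2]$ are coupled through $\epsilon$ and the naive bound $\E_G[(Z+\epsilon)^2] = \sigma^2 + o(1)$ does not vanish. The Cauchy--Schwarz-plus-Markov splitting forces the small measure of the bad event to dominate the second moment of $Z$; everything else reduces to routine Gaussian tail estimates and direct application of the hypothesis $\E_G[\epsilon^2] = o(n^{-1/2})$.
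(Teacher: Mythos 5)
Your proposal is correct, and it actually establishes the stronger rate $o(n^{-3/2})$; every step checks out (the pointwise projection bound, the union bound on the indicator, the super-polynomial Gaussian tail, Markov applied to $\epsilon^2$, and Cauchy--Schwarz with $\E_G[Z^4]=3\sigma^4<\infty$ for the cross term). The route is genuinely different from the paper's. The paper first splits $\hat\theta=\tilde\theta+u$ with $\tilde\theta$ exactly $\mathcal{N}(\E_G[\tau],\sigma^2/n)$, evaluates $\E_G[(\tilde\theta-\theta_{F_n})^2]$ in closed form by integrating the Gaussian tail beyond $c$ (bounding the deviation from $\theta_{F_n}$ by the deviation from $\E_G[\tau]$), and only afterwards folds the remainder back in via Cauchy--Schwarz. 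You instead keep $\hat\theta$ intact, exploit the fact that $\theta_{F_n}$ is the Euclidean projection of $\hat\theta$ onto $[\E_G[\tau]-c,\,\E_G[\tau]+c]$ to obtain the pointwise truncation bound $(\hat\theta-\theta_{F_n})^2\le(\hat\theta-\E_G[\tau])^2\,\mathbbm{1}[\,|\hat\theta-\E_G[\tau]|>c\,]$, and only then split $Z+\epsilon$ inside the expectation via the union bound on the indicator. This has a real technical payoff: the paper's tail integral implicitly treats $\theta_{F_n}$ as the projection of the Gaussian component $\tilde\theta$, whereas Definition 2 defines $\theta_{F_n}$ through $\hat\theta$, so the event on which $\tilde\theta-\theta_{F_n}$ vanishes is not exactly $\{|\tilde\theta-\E_G[\tau]|\le c\}$; your argument truncates on the correct event and so avoids having to patch that mismatch. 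What the paper's approach buys is an explicit, exponentially small closed-form expression for the Gaussian contribution; what yours buys is a sharper overall rate, a correct treatment of the coupling between the Gaussian part and $\epsilon$ (via Cauchy--Schwarz plus Markov rather than an exact marginal computation), and no need to integrate anything explicitly. The only cosmetic quibble is that your ``three pieces'' are really four after expanding $(2Z^2+2\epsilon^2)$ against the two indicators, with the two $\epsilon^2$ pieces merged, and the resulting constant is $4\E_G[\epsilon^2]$ rather than $2\E_G[\epsilon^2]$; this affects nothing.
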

\begin{proof}
We will show that the mean square error of $(\tilde\theta -  \theta_{F_n})$ converges to zero sufficiently quickly, implying that the rate of convergence of $\hat \theta$ is at worst governed by the mean square error bound ensured by $\epsilon$'s convergence rate. To obtain the rate of convergence of the mean square error of $\tilde\theta$, we integrate over its squared deviation from the target parameter.  Within $c$ of $\E_G[\tau]$, the squared deviation is zero, thus we need only integrate over the squared deviation over the tails of the normal distribution. To ease calculations, we obtain an upper bound by integrating over the squared deviation from $\E_G[\tau]$, rather than from $\theta_{F_n}$:
\begin{align*}
\E_G[(\tilde\theta - \theta_{F_n})^2] & \leq 2 \int_{c}^{\infty} x^2 \frac{\sqrt{n}}{\sigma \sqrt{2\pi}}e^{-\frac{x^2 n}{2 \sigma^2}} \\
& = c \sigma \sqrt{\frac{2}{\pi} } \frac{ e^{-\frac{c^2 n}{2 \sigma^2}}}{ \sqrt{n}}+2 \sigma^2\frac{\Phi \left(\frac{-c \sqrt{n}}{\sigma}\right)}{n} \\
& = o(n^{-1}).
\end{align*}
Since $\E_G[(\tilde\theta - \theta_{F_n})^2] = o(n^{-1})$ and $n^{-1/2} \E_G[\epsilon^2] = o(n^{-1})$, the Cauchy-Schwarz inequality ensures that $\E_G[(\hat\theta - \theta_{F_n})^2]  = o(n^{-1}) + o(n^{-1}) = o(n^{-1})$.
\end{proof}



%

%

\section{Discussion}

Our results highlight the additional certainty obtained by indifference about the population for which average causal effects are measured. It is well known that efficiency gains may be obtained through data-adaptive inference. But the extent to which the researcher benefits from indifference about the target parameter has been understated. Under treatment effect heterogeneity -- a precondition for locality to be a concern -- all root-$n$ consistent and asymptotically normal estimators of the average treatment effect are superefficient for a local average treatment effect. And while we do not speak to the substantive implications of indifference in scientific inquiry, we show how such indifference yields greatly increased statistical certainty.


\begingroup

\end{document}